\documentclass[11pt,reqno]{amsart}

\usepackage[T1]{fontenc}
\usepackage{lmodern}
\usepackage{microtype}
\usepackage{amsmath,amssymb,mathtools}
\usepackage{enumitem}
\usepackage{xcolor}
\usepackage[colorlinks=true,linkcolor=blue!55!black,
  citecolor=blue!55!black,urlcolor=blue!55!black]{hyperref}
\hypersetup{
  pdftitle={A counterexample to the Tang--Zhang Schatten norm conjecture and sharp positive results},
  pdfauthor={Zijian Zeng; Houde Liu; Kuru Ratnavelu},
  pdfkeywords={Schatten norm, matrix absolute value, sharp constant, rank-one matrix, counterexample}
}

\newtheorem{theorem}{Theorem}[section]

\newtheorem{corollary}[theorem]{Corollary}
\theoremstyle{remark}
\newtheorem{remark}[theorem]{Remark}

\newcommand{\C}{\mathbb C}
\newcommand{\M}{\mathbb M}
\newcommand{\Tr}{\operatorname{Tr}}

\newcommand{\TZ}{C^{\mathrm{TZ}}}

\title[A counterexample to a Schatten norm conjecture]
{A counterexample to the Tang--Zhang Schatten norm conjecture\\
and sharp positive results}

\author{Zijian Zeng}
\address{Institute of Computer Science and Digital Innovation,
UCSI University, Kuala Lumpur 56000, Malaysia}
\email{1002266693@ucsiuniversity.edu.my}

\author{Houde Liu}
\address{Tsinghua Shenzhen International Graduate School,
Tsinghua University, Shenzhen 518055, China}
\email{liu.hd@sz.tsinghua.edu.cn}

\author{Kuru Ratnavelu}
\address{Institute of Computer Science and Digital Innovation,
UCSI University, Kuala Lumpur 56000, Malaysia}
\address{Institute of Mathematical Sciences, University of Malaya,
Kuala Lumpur 50603, Malaysia}
\email{Kurunathan@ucsiuniversity.edu.my}

\subjclass[2020]{Primary 15A60; Secondary 15A45, 47B10}
\keywords{Schatten norm, matrix absolute value, sharp constant,
rank-one matrix, counterexample, trace inequality}

\begin{document}

\begin{abstract}
For \(m\ge2\), let \(c_p(m)\) be the all-dimensional best constant in
\[
 \left\|\sum_{k=1}^m A_k\right\|_p
 \le c_p(m)\left\|\sum_{k=1}^m |A_k|\right\|_p .
\]
Tang and Zhang conjectured an explicit formula for every finite \(p>1\).
We disprove the conjecture with two explicit real \(2\times2\) rank-one
matrices at \(p=3/2\).  The comparison is certified by seven strict
rational inequalities and, in particular, places the attained ratio above
\(207/200\) while the conjectured constant lies below \(207/200\).
On the positive side, we prove the conjectured sharp bound for every family
of rank-at-most-one summands when \(2\le p<\infty\), and classify all equality
cases.  We also prove the corresponding endpoint statement for \(p=\infty\).
Finally, for arbitrary complex matrices we establish the conjectured sharp
constant in the case \(m=2,\ p=4\).
\end{abstract}

\maketitle

\section{Introduction}

For \(A\in\M_n(\C)\), write
\[
 |A|=(A^*A)^{1/2}
\]
and let \(\|A\|_p=(\Tr |A|^p)^{1/p}\) be the Schatten \(p\)-norm for
\(1\le p<\infty\); \(\|\cdot\|_\infty\) denotes the operator norm.
For fixed \(m,n\ge1\), first define
\begin{equation}\label{eq:def-cpmn}
 c_p^{\mathrm{abs}}(m,n)=
 \sup_{(A_1,\ldots,A_m)\ne0}
 \frac{\left\|\sum_{k=1}^m A_k\right\|_p}
      {\left\|\sum_{k=1}^m |A_k|\right\|_p},
 \qquad A_k\in\M_n(\C).
\end{equation}
We use the dimension-free notation
\begin{equation}\label{eq:def-cpm}
 c_p(m)=\sup_{n\ge1}c_p^{\mathrm{abs}}(m,n).
\end{equation}
The all-zero family is excluded; the denominator otherwise cannot vanish.

Tang and Zhang \cite{Tang_2026} proved
\[
 c_1(m)=1,\qquad
 c_2(m)=\sqrt{\frac{1+\sqrt m}{2}},\qquad
 c_\infty(m)=\sqrt m,
\]
and proposed a formula for the remaining exponents.  For finite \(p>1\),
let \(x_{p,m}>1\) be the unique solution of
\begin{equation}\label{eq:root-general}
 x^p-2x-(m-1)=0.
\end{equation}
Their conjectured value is
\begin{equation}\label{eq:TZ}
 \TZ_{p,m}
 =
 \frac{\sqrt{x_{p,m}(x_{p,m}+m-1)}}
      {(x_{p,m}^p+m-1)^{1/p}}.
\end{equation}
The lower bound \(c_p(m)\ge\TZ_{p,m}\) is attained by a rank-one
equiangular family.

The dimension parameter matters at fixed size.  Writing
\(d=\min\{m,n\}\), Zhang \cite{Zhang_2026} subsequently obtained
\[
 c_1^{\mathrm{abs}}(m,n)=1,\qquad
 c_2^{\mathrm{abs}}(m,n)=\sqrt{\frac{1+\sqrt d}{2}},\qquad
 c_\infty^{\mathrm{abs}}(m,n)=\sqrt d,
\]
together with
\(c_p^{\mathrm{abs}}(m,n)\le
d^{1/2-1/(2p)}\) for \(1\le p\le\infty\).
Bourin and Lee \cite{Bourin_2026} also highlighted the question for
Schatten exponents other than two.  Neither result asserts the explicit
formula \eqref{eq:TZ} for general \(p\).

Our first result shows that this lower bound is not the sharp constant in
general.

\begin{theorem}[Exact counterexample]\label{thm:counterexample}
For \(m=n=2\) and \(p=3/2\), there are real rank-one matrices \(A_1,A_2\)
such that
\[
 \frac{\|A_1+A_2\|_{3/2}}
      {\||A_1|+|A_2|\|_{3/2}}
 >
 \frac{207}{200}
 >
 \TZ_{3/2,2}.
\]
Consequently, the Tang--Zhang conjecture is false.
\end{theorem}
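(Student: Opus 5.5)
The plan is to exhibit the matrices explicitly in a normal form where every quantity is algebraic of low degree. Then both sides of the inequality reduce to comparisons of rational numbers. For a real rank-one matrix \(A=u v^{T}\) with \(u,v\in\mathbb R^2\) we have \(A^{T}A=\|u\|^2 vv^{T}\), hence
\[
 |A|=\frac{\|u\|}{\|v\|}\,vv^{T}.
\]
We take unit vectors with rational coordinates, coming from Pythagorean triples such as \((3/5,4/5)\) or \((5/13,12/13)\), and rational weights:
\[
 A_1=\alpha\,u_1v_1^{T},\qquad A_2=\beta\,u_2v_2^{T},\qquad \|u_k\|=\|v_k\|=1 .
\]
Then \(A_1+A_2\) and \(|A_1|+|A_2|=\alpha v_1v_1^T+\beta v_2v_2^T\) have rational entries. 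For a real \(2\times 2\) matrix \(X\) the squared singular values are the roots of
\[
 t^2-\|X\|_F^2\,t+(\det X)^2=0,
\]
so they lie in a real quadratic field. The same holds for the eigenvalues of the positive semidefinite matrix \(|A_1|+|A_2|\).

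The parameters (the two angles between \(u_1,u_2\) and between \(v_1,v_2\), and the ratio \(\beta/\alpha\)) are first located by a numerical maximization of the ratio at \(p=3/2\). Heuristically, one should take \(u_1,u_2\) nearly aligned, so that the numerator adds coherently, and \(v_1,v_2\) at an intermediate angle, so that the denominator's spectrum is spread in the way the \(3/2\)-norm penalizes least. The optimizer is then rounded to nearby rational Pythagorean data that still beat \(207/200\) with a visible margin. The equiangular configuration realizes \(\TZ\), and the point is that at \(p=3/2\) an asymmetric configuration does strictly better.

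Certification of the left inequality proceeds as follows. Let \(s_1,s_2\) be the singular values of \(A_1+A_2\) and \(t_1,t_2\) the eigenvalues of \(|A_1|+|A_2|\). With \(r=207/200\), the claim is equivalent to
\[
 s_1^{3/2}+s_2^{3/2} > r^{3/2}\,(t_1^{3/2}+t_2^{3/2}).
\]
We pick rationals \(a_i\le s_i^{3/2}\) and \(b_i\ge t_i^{3/2}\), and a rational \(\rho\ge r^{3/2}\). Each such bound is verified by a strict rational inequality: for instance \(a_i^4< s_i^6\) is a polynomial inequality in \(s_i^2\), which lies in a quadratic field and can be checked by isolating the square root and squaring once more. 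It then remains to check the rational inequality \(a_1+a_2>\rho(b_1+b_2)\). This accounts for roughly five of the seven inequalities.

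For the right inequality, substitute \(y=\sqrt{x}\). Then \eqref{eq:root-general} with \(p=3/2\), \(m=2\) becomes
\[
 y^3-2y^2-1=0,
\]
whose unique root exceeding \(1\) is near \(2.2\). A sign change of this cubic at two rationals \(y_-<y_+\) brackets the root. Since \(\TZ_{3/2,2}=\sqrt{x(x+1)}/(x^{3/2}+1)^{2/3}\), we rewrite it in terms of \(y\), cube it, and square it. We then bound it above by \(207/200\) using the monotonicity of the numerator and denominator in \(y\) on \([y_-,y_+]\), which gives one more rational inequality.

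The main obstacle is the search step: finding rational data whose excess over \(207/200\) survives rounding. The gap between the attained ratio and \(\TZ_{3/2,2}\) is presumably small, of order \(10^{-3}\). I would first optimize in floating point over the three continuous parameters, then scan small Pythagorean triples near the optimum. I would tighten the rational approximations of the square roots, using continued-fraction convergents, until every inequality holds with room to spare.
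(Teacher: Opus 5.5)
Your architecture matches the paper's. You use an explicit pair of real rank-one \(2\times2\) matrices with exactly computable spectra, rational enclosures of the \(3/2\)-powers certified by raising to integer powers, and the substitution \(y=\sqrt{x}\) to turn the root equation into \(y^3-2y^2-1=0\). The gap is that you never produce the witness. For an existence statement, the witness together with its certificate \emph{is} the proof.

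The left inequality rests entirely on the hope that a floating-point search, rounded to Pythagorean data, lands above \(207/200\). You flag this yourself as the main obstacle and only guess the size of the gap. You also give no reason why any configuration should beat the equiangular one: ``an asymmetric configuration does strictly better'' is asserted, not shown. Since no rationals are exhibited, neither \(R>207/200\) nor \(\TZ_{3/2,2}<207/200\) is actually verified.

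The missing idea, which also justifies your heuristic of nearly aligned left vectors, is a one-sided perturbation. With equal unit weights the Gram matrices \(L=\begin{pmatrix}1&a\\ a&1\end{pmatrix}\) and \(G=\begin{pmatrix}1&c\\ c&1\end{pmatrix}\) commute. So \(A_1+A_2\) has squared singular values \((1+a)(1+c)\) and \((1-a)(1-c)\), while \(|A_1|+|A_2|\) has eigenvalues \(1\pm c\). At \(a=1\), \(c=s_{3/2,2}\), the ratio is exactly \(\TZ_{3/2,2}\). Lowering \(a\) to \(1-\epsilon\) adds \((\epsilon(1-c))^{3/4}\) to \(\|A_1+A_2\|_{3/2}^{3/2}\) and leaves the denominator unchanged. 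Because \(3/4<1\), this term beats the \(O(\epsilon)\) loss in the other term.

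That already disproves the conjecture, but clearing the specific separator \(207/200\) still needs concrete data. The paper takes \(a=39/40\), \(c=5/8\), so all four spectral values (\(1027/320,\ 3/320\) and \(13/8,\ 3/8\)) are rational. Since only the Gram data matter, your quadratic-field arithmetic and rational coordinates are unnecessary. For the right inequality your two-sided bracket would work. However, the relation \(y^3+1=2(y^2+1)\) collapses \((\TZ_{3/2,2})^6\) to \(y^6/(16(y^2+1))\), which is increasing, so the single upper bound \(y<1103/500\) suffices.
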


The failure occurs inside the rank-one class, but on the opposite side of
the Hilbertian exponent from the natural positive result.

\begin{theorem}[Sharp rank-one bound]\label{thm:rank-one}
Let \(m\ge2\), \(2\le p<\infty\), and let
\(A_1,\ldots,A_m\in\M_n(\C)\) have rank at most one and not all vanish.
Then
\begin{equation}\label{eq:rank-one-bound}
 \left\|\sum_{k=1}^m A_k\right\|_p
 \le
 \TZ_{p,m}
 \left\|\sum_{k=1}^m |A_k|\right\|_p .
\end{equation}
The constant is sharp in the dimension-free rank-one problem and is attained
whenever \(n\ge m\).  Equality holds precisely as
follows, up to common input and output unitaries, a common positive scale,
and the harmless phase changes in rank-one factorizations:
\[
 A_k=r\,u v_k^*,\qquad
 \langle v_j,v_k\rangle=
 \begin{cases}
 1,&j=k,\\
 s_{p,m},&j\ne k,
 \end{cases}
\]
where \(r>0\), \(u\) is a unit vector, and
\[
 s_{p,m}=\frac{x_{p,m}-1}{x_{p,m}+m-1}.
\]
In particular, equality requires \(n\ge m\).
\end{theorem}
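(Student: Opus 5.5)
The plan is to reduce the rank-one problem to an optimization over Gram matrices, to show that for \(p\ge2\) the worst case has a common left vector, and then to solve the resulting scalar problem.

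\emph{Reduction.} Write each nonzero summand as \(A_k=x_k y_k^*\) with \(\|x_k\|=1\). Then \(|A_k|=y_ky_k^*/\|y_k\|\). Put \(w_k=y_k/\|y_k\|^{1/2}\), so that \(|A_k|=w_kw_k^*\) and \(A_k=\lambda_k x_k w_k^*\) with \(\lambda_k=\|w_k\|\). With \(W=[w_1,\dots,w_m]\), \(G=W^*W\), \(\Lambda=\mathrm{diag}(\lambda_k)\) and \(K=X^*X\), this gives
\[
 \Bigl\|\sum_k|A_k|\Bigr\|_p=\|WW^*\|_p=\|G\|_p,
 \qquad
 \Bigl\|\sum_kA_k\Bigr\|_p^2=\bigl\|W\Lambda K\Lambda W^*\bigr\|_{p/2}.
\]
Here \(\lambda_k^2=G_{kk}\), and \(K\) ranges over correlation matrices (PSD with unit diagonal). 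Thus the problem becomes: maximize \(\|W\Lambda K\Lambda W^*\|_{p/2}/\|G\|_p^2\) over \(W\) and over correlation matrices \(K\).

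\emph{Key step: aligning the left vectors.} I would show that for \(q=p/2\ge1\),
\[
 \|W\Lambda K\Lambda W^*\|_q\le\max_{|\theta_k|=1}\Bigl\|\sum_k\theta_k\lambda_kw_k\Bigr\|^2 .
\]
The right-hand side is the value at \(K=\theta\theta^*\), that is, all \(x_k\) equal to \(\bar\theta_k u\); this is the step I expect to be the main obstacle. The map \(K\mapsto\|Z^*KZ\|_q\) (with \(Z=\Lambda W^*\)) is convex, so the maximum over the elliptope is attained at an extreme point. However, extreme correlation matrices need not have rank one, so convexity alone is not enough. What I would try first is to write \(K\) as an average of rank-one matrices \(\theta\theta^*\) using random phases or Gaussian sign vectors: if \(g\) is Gaussian with covariance \(K\) and \(\theta=g/|g|\) entrywise, a Grothendieck-type identity relates \(K\) to \(\mathbb E\,\theta\theta^*\). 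Alternatively, I would interpolate between the cases \(p=2\) (a trace identity) and \(p=\infty\) (Schur-product norm bounds, \(\|Z^*(K\circ\cdot)Z\|\)). The sanity check is the orthogonal case: with \(x_k\) orthonormal we get \(\bigl(\sum\|z_k\|^p\bigr)^{2/p}\le\sum\|z_k\|^2\), and the inequality is reversed for \(p<2\). This reversal is consistent with the counterexample of Theorem~\ref{thm:counterexample}.

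\emph{Scalar problem and equality.} Once \(x_k\equiv u\), absorb the phases into \(w_k\). The ratio squared becomes \(\lambda^{\!\top}G\lambda/\|G\|_p^2\) with \(\lambda_k=\sqrt{G_{kk}}\), to be maximized over PSD \(G\). I would first show the maximizer has real nonnegative off-diagonal entries, using \(\lambda^{\!\top}G\lambda\le\lambda^{\!\top}|G|_{\mathrm{entry}}\lambda\) together with a phase/positivity argument. Next I would symmetrize: averaging \(G\) over permutations, after normalizing \(\lambda\) to be constant, keeps the numerator fixed and does not increase \(\|G\|_p\) by convexity. Justifying the normalization of \(\lambda\) needs a separate Lagrange argument in \(\lambda\). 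This leaves \(G=(1-s)I+sJ\), with eigenvalues \(1+(m-1)s\) once and \(1-s\) with multiplicity \(m-1\). The one-variable maximization of \(m(1+(m-1)s)/\bigl((1+(m-1)s)^p+(m-1)(1-s)^p\bigr)^{2/p}\) has critical equation equivalent to \(x^p-2x-(m-1)=0\) with \(x=(1+(m-1)s)/(1-s)\). This yields \(s=s_{p,m}\) and the value \(\TZ_{p,m}\). Strictness in each inequality (strict convexity of \(\|\cdot\|_p^p\), and strictness of the alignment lemma unless all \(x_k\) are parallel) gives the equality classification. That classification forces the \(m\) vectors \(v_k\) with Gram matrix \((1-s)I+sJ\), which is nonsingular, so equality requires \(n\ge m\). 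Conversely, \(n\ge m\) allows this configuration, which gives attainment.
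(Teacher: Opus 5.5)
\textbf{There is a genuine gap.} It is your key step, the alignment inequality $\|W\Lambda K\Lambda W^*\|_q\le\max_{|\theta_k|=1}\|\sum_k\theta_k\lambda_kw_k\|^2$. This inequality is not merely unproved; it is false for $p$ at and slightly above $2$. So no averaging or interpolation argument can supply it in the range you need. (It does hold at $p=\infty$, since $v^*Kv\le(\sum_k|v_k|)^2$ for every correlation matrix $K$.)

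At $q=1$ the inequality says that $\max_K\Tr(K\Lambda G\Lambda)$ over correlation matrices equals $\max_\theta\theta^*\Lambda G\Lambda\theta$. That is exactness of the semidefinite relaxation of the complex positive-semidefinite Grothendieck problem, which fails. Here is a concrete instance.
\begin{itemize}
\item Let $x_1,\dots,x_6\in\mathbb R^3$ be unit vectors along the six axes of a regular icosahedron. Then $\sum_kx_kx_k^T=2I_3$ and $|\langle x_j,x_k\rangle|=1/\sqrt5$ for $j\ne k$.
\item Put $A_k=x_kx_k^T$, so $w_k=x_k$, $\lambda_k=1$ and $K=G$. Then $\sum_kA_k=2I_3$, and your left-hand side at $p=2$ equals $12$.
\item On the right, $\max_\theta\|\sum_k\theta_kx_k\|^2=\max_{\|y\|=1}\big(\sum_k|\langle x_k,y\rangle|\big)^2\le 6\sum_k|\langle x_k,y\rangle|^2=12$. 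Equality requires $|\langle x_k,y\rangle|^2=1/3$ for every $k$.
\item Writing $y=a+ib$, that condition reads $\langle x_kx_k^T,aa^T+bb^T\rangle=1/3$ for all $k$. The six projectors $x_kx_k^T$ have Gram matrix $\tfrac45I+\tfrac15J$, so they span the real symmetric $3\times3$ matrices. This forces $aa^T+bb^T=\tfrac13I_3$, which is impossible in rank $\le2$.
\end{itemize}
So the aligned value is strictly below $12$. By continuity in $p$, the lemma fails on some interval $[2,2+\varepsilon)$.

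Two further consequences follow. The equality classification you base on strictness of this lemma falls with it. Separately, your reduction to constant $\lambda$ before symmetrizing $G$ is left unproved.

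\textbf{The missing idea.} You never need to compare with the aligned configuration. It suffices to compare with a scalar proxy that the extremal family happens to attain. Since $M=W\Lambda K\Lambda W^*\ge0$ and $q\ge1$,
\[
 \|M\|_q\le\Tr M=\Tr\big((\Lambda K\Lambda)G\big)\le\mu\,\Tr(\Lambda K\Lambda)=\mu R,
\]
where $\mu=\lambda_{\max}(G)$ and $R=\Tr G$. The last equality holds because $\Lambda K\Lambda$ and $G$ share the diagonal $(\lambda_k^2)$.

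Convexity gives $\Tr G^p\ge\mu^p+(R-\mu)^p/(m-1)^{p-1}$. Set $y=(m-1)\mu/(R-\mu)\ge1$; the case $\mu=R$ gives ratio at most $1$. Then the squared ratio is at most $y(y+m-1)/(y^p+m-1)^{2/p}$, which is maximized at $y=x_{p,m}$. This is the paper's route.

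Equality can then be tracked directly:
\begin{itemize}
\item $G$ must have spectrum $\{\mu,\beta,\dots,\beta\}$ with $\mu=x_{p,m}\beta>\beta$.
\item Equality in the trace bound forces the range of $\Lambda K\Lambda$ into the one-dimensional top eigenspace of $G$. With $\omega$ its unit eigenvector, $\Lambda K\Lambda=R\,\omega\omega^*$, so the left vectors agree up to phase, and $G=\beta I+(\mu-\beta)\omega\omega^*$.
\item Comparing diagonals gives $|\omega_k|^2=1/m$. Hence the scales are equal and the normalized off-diagonal Gram entries equal $(\mu-\beta)/R=s_{p,m}$.
\end{itemize}
Your final one-variable computation on $G=(1-s)I+sJ$ is correct and agrees with this. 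On this route, however, no symmetrization or Lagrange argument in $\lambda$ is needed.
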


For \(p=\infty\), the same rank-one argument gives the sharp constant
\(\sqrt m\), with the right vectors orthonormal.  Our third result leaves
the rank-one restriction entirely.

\begin{theorem}[The full case \(m=2,\ p=4\)]\label{thm:p4}
Let \(A,B\in\M_n(\C)\), and let \(x>1\) be the solution of
\[
 x^4-2x-1=0.
\]
Then
\begin{equation}\label{eq:p4-bound}
 \|A+B\|_4^4
 \le
 \frac{x^2(x+1)}2\,\||A|+|B|\|_4^4.
\end{equation}
The constant is sharp in the dimension-free problem, is attained for every
\(n\ge2\), and equals \((\TZ_{4,2})^4\).
\end{theorem}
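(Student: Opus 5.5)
The sharpness and the identification of the constant are immediate. With \(p=4\), \(m=2\), \eqref{eq:TZ} gives \((\TZ_{4,2})^4=x^2(x+1)^2/(x^4+1)\). Since \(x^4+1=2x+2\), this equals \(x^2(x+1)/2\). Attainment for every \(n\ge2\) follows from the equality case of Theorem~\ref{thm:rank-one} with \(p=4\), \(m=2\), \(n\ge m\). The content of the theorem is therefore the upper bound \eqref{eq:p4-bound} for arbitrary \(A,B\). We may assume the two sides are not both zero.

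\emph{Reduction to positive data and a contraction.} Write polar decompositions \(A=UP\), \(B=VQ\) with \(P=|A|\), \(Q=|B|\) and \(U,V\) unitary, and set \(W=U^*V\). Then \((A+B)^*(A+B)=P^2+Q^2+PWQ+QW^*P\), so
\[
\|A+B\|_4^4=\Tr\bigl(P^2+Q^2+PWQ+QW^*P\bigr)^2 .
\]
Expanding this square, \(\|A+B\|_4^4\) becomes an explicit trace polynomial in \(P,Q,W,W^*\), namely
\[
\Tr(P^2+Q^2)^2+4\,\mathrm{Re}\Tr\bigl((P^2+Q^2)PWQ\bigr)+2\,\mathrm{Re}\Tr(PWQ)^2+2\Tr(QW^*P^2WQ).
\]
The right-hand side of \eqref{eq:p4-bound} expands as
\[
K\bigl(\Tr P^4+\Tr Q^4+4\Tr P^3Q+4\Tr PQ^3+4\Tr P^2Q^2+2\Tr(PQ)^2\bigr),
\qquad K=x^2(x+1)/2 .
\]
Since everything is linear or quadratic in \(W\), I would treat \(W\) as ranging over the unit ball of \(\M_n(\C)\). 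The goal is then to show that \(K\cdot\mathrm{RHS}-\mathrm{LHS}\ge0\) for all \(P,Q\ge0\) and all contractions \(W\).

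\emph{Sum-of-squares certificate.} I would look for an identity
\[
K\cdot\mathrm{RHS}-\mathrm{LHS}=\sum_i \Tr(Y_i^*Y_i)+\sum_j \Tr\bigl(Z_j(I-W^*W)Z_j^*\bigr)\ \text{(or with \(I-WW^*\))},
\]
where each \(Y_i\) is a linear combination of words such as \(P^2\), \(Q^2\), \(PQ\), \(PWQ\), \(P^{3/2}WQ^{1/2}\), \(WQ^2\), \(P^2W\), with real coefficients depending on \(x\). The \(Z_j\) are similar words, and the second sum absorbs the contraction constraint. Cross terms like \(\Tr P^3Q\) on the right must dominate \(\mathrm{Re}\Tr P^3WQ\) on the left. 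This suggests Young-type squares such as
\[
\Tr\bigl|\alpha P^{3/2}-\beta P^{1/2}WQ\bigr|^2,
\]
with weights chosen so that the ratio \(\alpha/\beta\) matches the extremal configuration.

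\emph{Fixing the coefficients.} I would determine the coefficients from the equality case of Theorem~\ref{thm:rank-one}: \(P=r\,v_1v_1^*\), \(Q=r\,v_2v_2^*\), \(\langle v_1,v_2\rangle=s=(x-1)/(x+1)\), with \(W\) mapping \(v_2\) to \(v_1\) appropriately. Every \(Y_i\) must vanish there, and the first-order variation must vanish as well. This gives linear conditions on the coefficients. The relation \(x^4=2x+1\) should then make the resulting identity exact.

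\emph{Main obstacle.} Finding a certificate that is valid for noncommuting \(P,Q\) is the hard step. Simply reducing to the rank-one or commuting case is not legitimate, because the counterexample at \(p=3/2\) shows that such heuristics can fail. If a direct certificate proves elusive, I would first apply Cauchy--Schwarz to the \(W\)-dependent terms:
\[
|\Tr P^3WQ|\le \|P^{3/2}\|_2\,\|P^{3/2}WQ\|_2 .
\]
Together with the analogous bounds, this reduces the problem to an inequality among the scalars \(\Tr P^4\), \(\Tr Q^4\), \(\Tr P^2Q^2\), \(\Tr(PQ)^2\), \(\Tr P^3Q\), \(\Tr PQ^3\). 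I would then close that scalar inequality using trace inequalities valid for positive matrices, for example \(\Tr(PQ)^2\le\Tr P^2Q^2\) and \(\Tr P^3Q\ge0\), followed by a two-variable optimization whose maximum is \(K\).
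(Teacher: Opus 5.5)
Your identification of the constant and your sharpness argument are correct and agree with the paper, whose extremal pair is the same rank-one family $A=ur_1^*$, $B=ur_2^*$ with $\langle r_1,r_2\rangle=(x-1)/(x+1)$. The gap is the upper bound \eqref{eq:p4-bound} itself. The sum-of-squares certificate is only a search plan, and the fallback fails at two concrete points.

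First, by expanding $(A+B)^*(A+B)=P^2+Q^2+PWQ+QW^*P$ you trap $W$ between $P$ and $Q$, and Cauchy--Schwarz does not remove it. The bound $|\Tr P^3WQ|\le\|P^{3/2}\|_2\|P^{3/2}WQ\|_2$ still contains $W$, and also $\Tr P^3$, which is not on your list. Natural $W$-free majorants such as $\|PWQ\|_2\le\|P\|_4\|Q\|_4$ are blind to the mixed traces. For example, take $P=e_1e_1^*$, $Q=e_2e_2^*$: every mixed trace on your list vanishes and $\Tr(P+Q)^4=2$. Yet for the swap $W$ your bound gives $|\Tr P^3WQ|\le1$, although the term is actually $0$. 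So your estimate of the numerator there is at least $\Tr(P^2+Q^2)^2+4+2\|PWQ\|_2^2=8$, which exceeds $2K\approx4.66$ for your $K=x^2(x+1)/2\approx2.33$. The missing idea is to expand the other Gram matrix, $\|A+B\|_4^2=\|AA^*+BB^*+AB^*+BA^*\|_2$; this is the paper's $SS^*$ with $S=P+WQ$. There the unitaries sit on the outside, so $\|AB^*\|_2=\|UPQV^*\|_2=\|PQ\|_2$. The Hilbert--Schmidt triangle inequality then gives at once the $W$-free bound $\|A+B\|_4^4\le(\sqrt a+\sqrt b+2\sqrt d)^2$, where $a=\Tr P^4$, $b=\Tr Q^4$, $d=\Tr P^2Q^2$.

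Second, the denominator inequalities you name are too weak or point the wrong way. The inequality $\Tr(PQ)^2\le\Tr P^2Q^2$ bounds from above a term that must be bounded from below. Positivity of $\Tr P^3Q$, $\Tr PQ^3$, $\Tr(PQ)^2$ only yields $\Tr(P+Q)^4\ge a+b+4d$. Against the numerator bound above, $a=b=1$, $d=1/4$ then gives the ratio $9/3=3>K$. The paper instead uses two sharper facts:
$\Tr P^3Q+\Tr PQ^3-2d=\|P^{1/2}(P-Q)Q^{1/2}\|_2^2\ge0$, and $\Tr(PQ)^2\ge d^2/\sqrt{ab}$. The latter follows from the three-factor H\"older inequality $\|PQ\|_2\le\|P^{1/2}\|_8\|P^{1/2}Q^{1/2}\|_4\|Q^{1/2}\|_8$. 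Both are needed in this scheme. With the first alone, at $a=b$ and $s^2=d/\sqrt{ab}$, the ratio bound is $2(1+s)^2/(1+6s^2)$, whose maximum $7/3$ at $s=1/6$ still exceeds $K$. With $\Tr(P+Q)^4\ge a+b+12d+2d^2/\sqrt{ab}$, the two-variable optimization in $z=(\sqrt a+\sqrt b)/(ab)^{1/4}\ge2$ and $s\in[0,1]$ closes exactly at $x^2(x+1)/2$.
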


The paper is organized as follows.  Section~\ref{sec:counterexample} gives
the exact counterexample and its rational certificate.
Section~\ref{sec:rank-one} proves the sharp rank-one theorem and its equality
statement.  Section~\ref{sec:p4} proves Theorem~\ref{thm:p4}.

\section{An exact \texorpdfstring{\(2\times2\)}{2-by-2} counterexample}
\label{sec:counterexample}

Set
\[
 e=\begin{pmatrix}1\\0\end{pmatrix},\qquad
 u=\begin{pmatrix}39/40\\ \sqrt{79}/40\end{pmatrix},\qquad
 v=\begin{pmatrix}5/8\\ \sqrt{39}/8\end{pmatrix}.
\]
These are real unit vectors.  Define
\begin{equation}\label{eq:counterexample-matrices}
 A_1=ee^T=
 \begin{pmatrix}1&0\\0&0\end{pmatrix},\qquad
 A_2=uv^T=
 \begin{pmatrix}
 39/64&39\sqrt{39}/320\\[1mm]
 \sqrt{79}/64&\sqrt{3081}/320
 \end{pmatrix}.
\end{equation}
Both matrices have rank one and unique nonzero singular value equal to one.
Consequently,
\[
 |A_1|=ee^T,\qquad |A_2|=vv^T.
\]

\begin{proof}[Proof of Theorem~\ref{thm:counterexample}]
Let \(U=(e,u)\) and \(V=(e,v)\).  Then \(A_1+A_2=UV^T\), and the two
Gram matrices are
\[
 L=U^TU=
 \begin{pmatrix}1&39/40\\39/40&1\end{pmatrix},
 \qquad
 G=V^TV=
 \begin{pmatrix}1&5/8\\5/8&1\end{pmatrix}.
\]
The squared singular values of \(UV^T\) are the eigenvalues of \(LG\).
The two Gram matrices are simultaneously diagonalized by
\((1,1)^T\) and \((1,-1)^T\), so those squared singular values are
\begin{equation}\label{eq:counterexample-sv}
 (1+39/40)(1+5/8)=\frac{1027}{320},
 \qquad
 (1-39/40)(1-5/8)=\frac3{320}.
\end{equation}
On the other hand, the eigenvalues of
\(|A_1|+|A_2|=VV^T\) are
\begin{equation}\label{eq:counterexample-den-eigs}
 \frac{13}{8},\qquad \frac38.
\end{equation}
Thus, if
\[
 R=\frac{\|A_1+A_2\|_{3/2}}
         {\||A_1|+|A_2|\|_{3/2}},
\]
then
\begin{equation}\label{eq:Rpower}
 R^{3/2}=
 \frac{(1027/320)^{3/4}+(3/320)^{3/4}}
      {(13/8)^{3/2}+(3/8)^{3/2}}.
\end{equation}

We first prove \(R>207/200\) using rational arithmetic only.  Positivity
allows us to raise each proposed enclosure to the fourth or second power.
The exact differences are
\begin{align}
 \left(\frac{1027}{320}\right)^3
 -\left(\frac{11989}{5000}\right)^4
 &=\frac{29144230879351}{40000000000000000}>0,\label{eq:cert1}\\
 \left(\frac3{320}\right)^3
 -\left(\frac{301}{10000}\right)^4
 &=\frac{124819571}{40000000000000000}>0,\label{eq:cert2}\\
 \left(\frac{4143}{2000}\right)^2
 -\left(\frac{13}{8}\right)^3
 &=\frac{773}{8000000}>0,\label{eq:cert3}\\
 \left(\frac{2297}{10000}\right)^2
 -\left(\frac38\right)^3
 &=\frac{5543}{200000000}>0.\label{eq:cert4}
\end{align}
Put
\[
 A_0=\frac{11989}{5000}+\frac{301}{10000}
 =\frac{24279}{10000},\qquad
 B_0=\frac{4143}{2000}+\frac{2297}{10000}
 =\frac{5753}{2500},
\]
and \(q=207/200\).  A final exact comparison gives
\begin{equation}\label{eq:cert5}
 A_0^2-q^3B_0^2
 =\frac{1172956601313}{50000000000000}>0.
\end{equation}
Equations~\eqref{eq:cert1}--\eqref{eq:cert5} imply
\[
 R^{3/2}>\frac{A_0}{B_0}>q^{3/2},
\]
and hence \(R>q\).

It remains to put the conjectured constant below the same rational separator.
Let \(x>1\) solve \(x^{3/2}-2x-1=0\), and write \(x=t^2\).
The unique positive root \(t\) of
\begin{equation}\label{eq:t-cubic}
 h(t)=t^3-2t^2-1=0
\end{equation}
lies above \(4/3\), where \(h\) is strictly increasing.  Moreover,
\begin{equation}\label{eq:cert6}
 h\left(\frac{1103}{500}\right)
 =\frac{310727}{125000000}>0,
\end{equation}
so \(t<T:=1103/500\).

Writing \(C=\TZ_{3/2,2}\) and using \(t^3=2t^2+1\), we obtain
\[
 C^6=\frac{t^6}{16(t^2+1)}=:H(t).
\]
The function \(H\) is strictly increasing for \(t>0\), since
\[
 H'(t)=\frac{t^5(2t^2+3)}{8(t^2+1)^2}>0.
\]
The remaining exact difference is
\begin{equation}\label{eq:cert7}
 q^6-H(T)
 =
 \frac{26731151399029597}
 {18772595200000000000}>0.
\end{equation}
It follows that \(C^6<H(T)<q^6\), hence \(C<q<R\).
\end{proof}

\begin{remark}
Numerically,
\[
 R=1.0364136587048904\ldots,\qquad
 \TZ_{3/2,2}=1.0346539518514341\ldots.
\]
These decimals play no role in the proof.
\end{remark}

\section{The sharp rank-one problem for
\texorpdfstring{\(p\ge2\)}{p greater than or equal to 2}}
\label{sec:rank-one}

We now prove Theorem~\ref{thm:rank-one}.  The proof reduces the matrix
problem to a single scalar variable.

\begin{proof}[Proof of Theorem~\ref{thm:rank-one}]
Write
\[
 A_k=r_k u_kv_k^*,\qquad r_k\ge0,
\]
where \(u_k,v_k\) are unit vectors whenever \(r_k>0\).  Form the column
matrices
\[
 U=(\sqrt{r_1}u_1,\ldots,\sqrt{r_m}u_m),\qquad
 V=(\sqrt{r_1}v_1,\ldots,\sqrt{r_m}v_m)
\]
and their Gram matrices
\[
 L=U^*U,\qquad G=V^*V.
\]
Then
\begin{equation}\label{eq:rank-one-factorization}
 \sum_k A_k=UV^*,\qquad
 \sum_k|A_k|=VV^*,
\end{equation}
and
\begin{equation}\label{eq:common-diagonal}
 \operatorname{diag}L=\operatorname{diag}G=(r_1,\ldots,r_m),\qquad
 \Tr L=\Tr G=:R>0.
\end{equation}

Let
\[
 K=L^{1/2}GL^{1/2}.
\]
The nonzero eigenvalues of \(K\) are the squared singular values of \(UV^*\);
the nonzero eigenvalues of \(G\) are the eigenvalues of \(VV^*\).
Therefore
\begin{equation}\label{eq:rank-one-traces}
 \left\|\sum_k A_k\right\|_p^p=\Tr K^{p/2},\qquad
 \left\|\sum_k|A_k|\right\|_p^p=\Tr G^p.
\end{equation}
Since \(p/2\ge1\), the nonnegative eigenvalues of \(K\) give
\begin{equation}\label{eq:trace-power}
 \Tr K^{p/2}\le(\Tr K)^{p/2}.
\end{equation}
If \(\lambda=\lambda_{\max}(G)\), then
\begin{equation}\label{eq:trace-LG}
 \Tr K=\Tr(LG)\le \lambda\Tr L=R\lambda.
\end{equation}

List the \(m\) eigenvalues of \(G\), including zeros, as
\(\lambda=\lambda_1\ge\lambda_2\ge\cdots\ge\lambda_m\ge0\).
Convexity gives
\begin{equation}\label{eq:denominator-convexity}
 \Tr G^p
 \ge
 \lambda^p+\frac{(R-\lambda)^p}{(m-1)^{p-1}}.
\end{equation}
If \(\lambda=R\), then \eqref{eq:rank-one-traces}--\eqref{eq:trace-LG}
give a ratio at most one, which is strictly smaller than the desired sharp
constant.  Suppose henceforth that \(\lambda<R\), and put
\[
 y=\frac{(m-1)\lambda}{R-\lambda}.
\]
Since \(\lambda\ge R/m\), one has \(y\ge1\).  Combining
\eqref{eq:rank-one-traces}--\eqref{eq:denominator-convexity} yields
\begin{equation}\label{eq:scalar-reduction}
 \frac{\|\sum_kA_k\|_p}{\|\sum_k|A_k|\|_p}
 \le
 F_{p,m}(y)
 :=
 \frac{\sqrt{y(y+m-1)}}{(y^p+m-1)^{1/p}}.
\end{equation}
Direct differentiation gives
\[
 \frac{d}{dy}\log F_{p,m}(y)
 =
 \frac{(m-1)(2y+m-1-y^p)}
 {2y(y+m-1)(y^p+m-1)}.
\]
For \(p\ge2\), the function \(y^p-2y-(m-1)\) is strictly increasing on
\([1,\infty)\), apart from an inessential zero derivative at the left
endpoint when \(p=2\).  It has exactly one zero \(x_{p,m}>1\).
Thus \(F_{p,m}\) has a unique maximum at \(x_{p,m}\), and
\eqref{eq:rank-one-bound} follows from \eqref{eq:TZ}.

We next track equality.  Put
\[
 \beta=\frac{R-\lambda}{m-1}.
\]
Equality in the scalar maximization and in
\eqref{eq:denominator-convexity} forces
\[
 \lambda=x_{p,m}\beta,\qquad
 \operatorname{spec}(G)=\{\lambda,\beta,\ldots,\beta\}.
\]
The top eigenspace is one-dimensional.  Equality in \eqref{eq:trace-LG}
forces the range of \(L\) into this eigenspace.  If \(w\) is its unit
eigenvector, then
\begin{equation}\label{eq:equality-gram}
 L=Rww^*,\qquad
 G=\beta I+(\lambda-\beta)ww^*.
\end{equation}
The common diagonal condition \eqref{eq:common-diagonal} now gives
\[
 R|w_k|^2
 =
 \beta+(\lambda-\beta)|w_k|^2.
\]
Because \(R=\lambda+(m-1)\beta\), it follows that
\[
 |w_k|^2=\frac1m,\qquad r_k=\frac Rm
 \quad(1\le k\le m).
\]
After simultaneous phase changes in the factorizations of the \(A_k\),
we may take \(w=m^{-1/2}(1,\ldots,1)^T\).  Equation
\eqref{eq:equality-gram} then says that the \(u_k\) coincide and
\[
 \langle v_j,v_k\rangle
 =
 \frac{\lambda-\beta}{R}
 =
 \frac{x_{p,m}-1}{x_{p,m}+m-1}
 \quad(j\ne k).
\]
Conversely, this family takes equality at every step.  The displayed Gram
matrix is positive definite, so equality requires \(n\ge m\).
\end{proof}

\begin{corollary}[Rank-one endpoint]\label{cor:rank-one-infinity}
If \(A_1,\ldots,A_m\) have rank at most one, then
\[
 \left\|\sum_k A_k\right\|_\infty
 \le\sqrt m\left\|\sum_k|A_k|\right\|_\infty.
\]
The constant is sharp in the dimension-free sense.  Equality is possible only
when \(n\ge m\), and then holds precisely for equal nonzero singular values, a
common one-dimensional range, and pairwise orthogonal right vectors, modulo
the same unitary and phase symmetries as above.
\end{corollary}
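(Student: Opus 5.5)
We reuse the factorization from the proof of Theorem~\ref{thm:rank-one}. Write \(A_k=r_ku_kv_k^*\) and form \(U,V\), \(L=U^*U\), \(G=V^*V\), so that
\(\sum_kA_k=UV^*\) and \(\sum_k|A_k|=VV^*\). Then
\[
\Bigl\|\sum_kA_k\Bigr\|_\infty^2=\lambda_{\max}(L^{1/2}GL^{1/2}),\qquad
\Bigl\|\sum_k|A_k|\Bigr\|_\infty=\lambda:=\lambda_{\max}(G).
\]
The upper bound comes from two comparisons:
\[
\lambda_{\max}(L^{1/2}GL^{1/2})\le\Tr(L^{1/2}GL^{1/2})=\Tr(LG)\le\lambda\Tr L=\lambda R .
\]
Since \(R=\Tr G\le m\lambda\), the squared numerator is at most \(m\lambda^2\). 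This gives the ratio \(\sqrt m\). Equivalently, it is the limit \(p\to\infty\) of the scalar bound \(F_{p,m}\), whose supremum over \(y\ge1\) tends to \(\sqrt m\) as \(y\to\infty\). We would run the direct argument rather than a limit, because we also need the equality cases.

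For sharpness, assume \(n\ge m\). Take \(u\) a unit vector, \(v_1,\dots,v_m\) orthonormal, and \(A_k=uv_k^*\). Then \(\sum_kA_k=u(\sum_kv_k)^*\) has norm \(\|\sum_kv_k\|=\sqrt m\), while \(\sum_k|A_k|=\sum_kv_kv_k^*\) is a projection of norm \(1\). When \(n<m\) the supremum is still approached: the \(p<\infty\) extremals are not needed, and we can simply take nearly orthogonal unit vectors. The cleaner route is to note that the dimension-free constant is defined as a supremum over \(n\), which the orthonormal example attains for \(n\ge m\).

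Equality requires equality in all three inequalities.
\begin{enumerate}[label=(\roman*)]
\item Equality in \(\lambda_{\max}(K)\le\Tr K\) for the positive semidefinite matrix \(K=L^{1/2}GL^{1/2}\) forces \(K\) to have rank at most one.
\item Equality in \(\Tr(LG)\le\lambda\Tr L\) forces \(\operatorname{ran}L\) into the top eigenspace of \(G\).
\item Equality in \(\Tr G\le m\lambda\) forces \(G=\lambda I_m\).
\end{enumerate}
Step (iii) is the key one. It says the vectors \(\sqrt{r_k}v_k\) are pairwise orthogonal with common squared norm \(\lambda\), so all \(r_k=\lambda>0\) and the \(v_k\) are orthonormal. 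In particular \(n\ge m\).

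With \(G=\lambda I\), condition (ii) becomes vacuous. Condition (i) then says \(L\) has rank one, so \(L=R\,ww^*\). Comparing diagonals, using \(\operatorname{diag}L=(r_k)\) and \(r_k=\lambda\), gives \(|w_k|^2=1/m\). After phase changes in the factorizations we may take \(w\propto(1,\dots,1)\). Then \(L\) has all entries equal, so the vectors \(\sqrt{r_k}u_k\) all coincide, i.e. there is a common left vector. Conversely, such a family attains equality by the computation above.

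The main obstacle is the bookkeeping of the equality case. One must check that the rank-one condition on \(L\) together with the equal diagonal entries pins down a common range exactly modulo phases. One must also check the degenerate possibility that some \(r_k=0\); this is excluded because \(G=\lambda I\) with \(\lambda>0\).
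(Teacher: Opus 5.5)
Your proof is correct and is essentially the paper's argument. The paper gets \(\|UV^*\|_\infty^2\le R\lambda\) from submultiplicativity, \(\|UV^*\|_\infty\le\|U\|_\infty\|V\|_\infty=\sqrt{\|L\|_\infty\lambda}\le\sqrt{R\lambda}\), whereas you use the chain \(\lambda_{\max}(K)\le\Tr K=\Tr(LG)\le\lambda R\). From there both arguments use \(R=\Tr G\le m\lambda\), and the equality analysis is the same: \(G=\lambda I_m\), then \(L\) of rank one, then the common diagonal.

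Two of your side remarks are false and should be deleted, although the argument you actually run does not use them.

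First, at fixed \(n<m\) the value \(\sqrt m\) is not even approached. \(\C^n\) contains no \(m\) nearly orthogonal unit vectors. Your own chain also rules it out: it gives \(R=\Tr G\le\operatorname{rank}(G)\,\lambda\le n\lambda\), so the ratio is at most \(\sqrt n\). Sharpness therefore rests solely on the supremum over \(n\), which is the route you settle on.

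Second, as \(p\to\infty\) the maximizer \(x_{p,m}\) of \(F_{p,m}\) tends to \(1\), not to \(\infty\). The pointwise limit of \(F_{p,m}(y)\) is \(\sqrt{1+(m-1)/y}\), which is maximal at \(y=1\).
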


\begin{proof}
Use the notation in the preceding proof and set
\(\lambda=\lambda_{\max}(G)\).  Then
\[
 \|UV^*\|_\infty
 \le\|U\|_\infty\|V^*\|_\infty
 =\sqrt{\|L\|_\infty\lambda}
 \le\sqrt{R\lambda}.
\]
Since \(R=\Tr G\le m\lambda\), while
\(\|VV^*\|_\infty=\lambda\), the asserted inequality follows.
Equality requires \(L\) to have rank one and \(G=\lambda I_m\).
The common diagonal condition then gives exactly the stated configuration.
\end{proof}

\section{The full \texorpdfstring{\(m=2,\ p=4\)}{m=2, p=4} problem}
\label{sec:p4}

\begin{proof}[Proof of Theorem~\ref{thm:p4}]
Put \(H=|A|\) and \(K=|B|\).  Extend the partial isometries in the polar
decompositions to unitaries, and write \(A=UH,\ B=VK\).
With \(W=U^*V\), unitary invariance reduces the numerator to
\[
 S=H+WK.
\]
Set
\[
 a=\Tr H^4,\quad b=\Tr K^4,\quad d=\Tr H^2K^2.
\]
The Hilbert--Schmidt triangle inequality applied to
\[
 SS^*=H^2+HKW^*+WKH+WK^2W^*
\]
gives
\begin{equation}\label{eq:p4-numerator}
 \|S\|_4^4
 \le(\sqrt a+\sqrt b+2\sqrt d)^2.
\end{equation}

Introduce
\[
 e=\Tr H^3K,\qquad f=\Tr HK^3,\qquad g=\Tr HKHK.
\]
Cyclically collecting all words in the noncommutative expansion gives
\begin{equation}\label{eq:p4-expansion}
 \Tr(H+K)^4=a+b+4(e+f)+4d+2g.
\end{equation}
Two lower bounds are needed.  First,
\begin{equation}\label{eq:p4-positive}
 e+f-2d
 =
 \Tr\big((H-K)H(H-K)K\big)
 =
 \|H^{1/2}(H-K)K^{1/2}\|_2^2
 \ge0.
\end{equation}
Second, the three-factor Schatten H\"older inequality gives
\begin{align*}
 \sqrt d=\|HK\|_2
 &=
 \|H^{1/2}(H^{1/2}K^{1/2})K^{1/2}\|_2\\
 &\le
 \|H^{1/2}\|_8
 \|H^{1/2}K^{1/2}\|_4
 \|K^{1/2}\|_8
 =
 a^{1/8}g^{1/4}b^{1/8}.
\end{align*}
Thus, when \(ab>0\),
\begin{equation}\label{eq:g-lower}
 g\ge\frac{d^2}{\sqrt{ab}}.
\end{equation}
If \(ab=0\), one of \(H,K\) vanishes and the theorem is immediate.
Combining \eqref{eq:p4-expansion}--\eqref{eq:g-lower}, we obtain
\begin{equation}\label{eq:p4-denominator}
 \Tr(H+K)^4
 \ge a+b+12d+\frac{2d^2}{\sqrt{ab}}.
\end{equation}

Let \(A_0=\sqrt a,\ B_0=\sqrt b\), and define
\[
 z=\frac{A_0+B_0}{\sqrt{A_0B_0}}\ge2,\qquad
 s=\sqrt{\frac{d}{A_0B_0}}\in[0,1].
\]
The upper bound on \(s\) is the Hilbert--Schmidt Cauchy--Schwarz
inequality \(d\le\sqrt{ab}\).  From
\eqref{eq:p4-numerator} and \eqref{eq:p4-denominator},
\begin{equation}\label{eq:Phi}
 \frac{\|A+B\|_4^4}{\||A|+|B|\|_4^4}
 \le
 \Phi(z,s)
 :=
 \frac{(z+2s)^2}{z^2-2+12s^2+2s^4}.
\end{equation}

If \(s\ge1/2\), then
\[
 2(z^2-2+12s^2+2s^4)-(z+2s)^2
 =
 (z-2s)^2+16s^2+4s^4-4\ge0,
\]
so \(\Phi(z,s)\le2\).

Suppose \(0\le s\le1/2\).  The sign of
\(\partial\Phi/\partial z\) is the sign of
\[
 s^4+6s^2-zs-1.
\]
Since \(z\ge2\),
\[
 s^4+6s^2-zs-1
 \le s^4+6s^2-2s-1
 \le2s-1\le0;
\]
the middle inequality follows from
\(s^3+6s-4\le1/8+3-4<0\).
Consequently,
\begin{equation}\label{eq:f-s}
 \Phi(z,s)\le\Phi(2,s)
 =
 f(s):=\frac{2(1+s)^2}{1+6s^2+s^4}.
\end{equation}
The sign of \(f'(s)\) is the sign of
\[
 1-6s-2s^3-s^4.
\]
Hence \(f\) has a unique maximizer \(s_0\in(0,1/2)\), characterized by
\begin{equation}\label{eq:s0}
 s_0^4+2s_0^3+6s_0-1=0.
\end{equation}
The first branch cannot dominate, because \(f(1/6)>2\).

Set \(x=(1+s_0)/(1-s_0)\).  A direct calculation gives
\[
 (x^4-2x-1)(1-s_0)^4
 =
 2(s_0^4+2s_0^3+6s_0-1)=0,
\]
and
\begin{equation}\label{eq:f-constant}
 f(s_0)
 =
 \frac{x^2(x+1)^2}{x^4+1}
 =
 \frac{x^2(x+1)}2,
\end{equation}
where the last equality uses \(x^4+1=2(x+1)\).
Equations~\eqref{eq:Phi}--\eqref{eq:f-constant} prove
\eqref{eq:p4-bound}.

For sharpness, choose unit vectors \(r_1,r_2\) with
\(\langle r_1,r_2\rangle=s_0\), choose a unit vector \(u\), and set
\[
 A=ur_1^*,\qquad B=ur_2^*.
\]
Then
\[
 \|A+B\|_4^4=4(1+s_0)^2
\]
and
\[
 \||A|+|B|\|_4^4
 =(1+s_0)^4+(1-s_0)^4
 =2(1+6s_0^2+s_0^4).
\]
Their ratio is \(f(s_0)\), proving sharpness.
\end{proof}

\begin{remark}
Orthogonal direct sums of the two-dimensional extremal block give
higher-dimensional equality examples.  The proof above does not attempt a
complete classification of all equality cases for Theorem~\ref{thm:p4}.
\end{remark}

\section*{Reproducibility and disclosure}

The exact counterexample certificate consists of the seven positive rational
differences in \eqref{eq:cert1}--\eqref{eq:cert7}; it requires no numerical
linear algebra.  A standard-library verification script accompanies this
manuscript.  Low-dimensional numerical searches were used for exploration and
adversarial testing only.

OpenAI Codex assisted with proof exploration, counterexample search,
adversarial checking, and manuscript preparation.  The submitting author is
responsible for the correctness of every statement and for compliance with
the target journal's authorship and disclosure policies.

\bibliographystyle{plain}
\bibliography{references}

\end{document}